\newtheorem{theorem}{Theorem}[section]
\newtheorem{corollary}[theorem]{Corollary}
\newtheorem{lemma}[theorem]{Lemma}
\theoremstyle{definition}
\newtheorem{definition}[theorem]{\sc Definition}
\theoremstyle{remark}
\theoremstyle{remark}
\theoremstyle{remark}
\newtheorem{example}[theorem]{\sc Example}
\theoremstyle{remark}
\theoremstyle{remark}
\newtheorem{lem/defi}[thm]{Lemma/Definition}
\newtheorem{defi/lem}[thm]{Definition/Lemma}
\newtheorem{prop/defi}[thm]{Proposition/Definition}
\theoremstyle{definition}
\theoremstyle{remark}
\renewcommand{\Box}{\square}    
\newcommand{\hot}{\mathrm{h.o.t.}}
\newcommand{\im}{\mathop{\rm{Im}}\nolimits}
\newcommand{\ord}{\mathop{\rm{ord}}}
\newcommand{\grad}{\mathop{\rm{grad}}\nolimits}
\newcommand{\Horn}{{\rm{Horn}}}
\newcommand{\fin}{\hspace*{\fill}$\Box$}
\newcommand{\GC}{\mathcal{GC}}
\newcommand{\mcd}{\mathcal{D}}
\newcommand{\Cone}{\mathrm{Cone}}
\newcommand{\be}{\begin{equation}}
\newcommand{\ee}{\end{equation}}
\newcommand{\beqn}{\begin{eqnarray*}}
\newcommand{\eeqn}{\end{eqnarray*}}
\newcommand{\cC}{{\mathcal C}}
\newcommand{\bR}{{\mathbb R}}
\newcommand{\bC}{{\mathbb C}}
\newcommand{\F}{\mathbb{F}}
\newcommand{\bN}{{\mathbb N}}
\newcommand{\bQ}{{\mathbb Q}}
\title[On higher Lipschitz invariants]{On higher Lipschitz invariants}
\author{Piotr Migus}
\address{Air Force Institute of Technology,
ul. Ksi\c{e}cia Boles\l awa 6,
01-494 Warsaw, Poland}
\email{migus.piotr@gmail.com}
\author{Lauren\c tiu P\u aunescu}
\address{School of Mathematics and Statistics, University of Sydney,
  Sydney, NSW, 2006, Australia.}
\email{laurent@maths.usyd.edu.au}
\author{Mihai Tib\u ar}
\address{Math\' ematiques, UMR 8524 CNRS,
Universit\'e de Lille, \  59655 Villeneuve d'Ascq, France.}
\email{mtibar@univ-lille.fr}
\keywords{polar curves, Lipschitz invariants, moduli}
\subjclass[2010]{32S15, 32S70, 32S05}
\begin{document}

\begin{abstract} 
 We find new  bi-Lipschitz invariants for functions of two complex variables.
  \end{abstract}

\maketitle

\section{Introduction and preliminaries}\label{s:intro}
   
While the bi-Lipschitz
equivalence of complex analytic set germs does not admit moduli,  Henry and Parusi\'nski \cite{HP} firstly showed that, despite  the tempting belief that the same holds true for function germs,   moduli exist in this case.  See also the references to moduli in the related studies \cite{FR} and  \cite{CR}.

Based on the clustering gradient canyons, we have shown in \cite{PT} that the multi-level ``identity card'' of a 2-variable holomorphic function germ  $f: (\bC^2, 0) \to (\bC, 0)$
is a discrete bi-Lipschitz invariant. We have pointed out in \cite{MPT} how the modular bi-Lipschitz invariant found by Henry and Parusi\'nski \cite{HP}
 can be embedded into this picture as the first of the coefficients in the expansion of $f$ along polars.

\

In this paper we infer a new and different spirit in the research of invariants. Let $f= g\circ \varphi$ for $f,g$ holomoprhic function germs of 2 variables, and some bi-Lipschitz homeomorphism $\varphi$. Our main result, Theorem \ref{t:coeff},
shows that, surprisingly,  one may determine a development of $\varphi$ to a certain extent. The power of this result is illustrated by our second example in  \S\ref{s:examples}.

As a byproduct of our refined study \cite{PT}, we find here new bi-Lipschitz invariants within the identity card of the clusters, toward a complete list of bi-Lipschitz invariants enabling us to decide  the bi-Lipschitz type of a given
two-variables holomorphic function germ. This contributes to a long standing research project, as one can move away from a merely homeomorphism equivalence  (too weak) but not too close to a diffeomorphism equivalence (too rigid).
Our  Corollary \ref{c:invar1} unveils  a second level bi-Lipschitz invariant of Henry-Parusi\'nski type, based on two gradient canyons and their contact order. By Corollary \ref{c:invar2} we show how  to build, in a similar way, a third level bi-Lipschitz invariant of HP-type in case one disposes of 3 or more disjoint gradient canyons, and their contact orders satisfy certain conditions.  As expected, this procedure may continue recursively toward still higher level, producing finitely many invariants of HP-type, cf Corollary \ref{c:invar2}.

Our two examples have a key role in understanding two types of applications of Corollary  \ref{c:invar1}. Example \ref{example1} shows  how this result decides that any two generic function germs from a certain family are not bi-Lipschitz equivalent.  In contrast, Corollary  \ref{c:invar1} is no longer conclusive in Example \ref{example2}, where two generic function germs in a certain family turn out not to be bi-Lipschitz equivalent. To reach this conclusion we need to employ our key Theorem \ref{t:coeff}, which yields the existence of new bi-Lipschitz moduli at this level.

\ 

\noindent \emph{Added in proof}. After uploading our manuscript on arXiv,  N. Nguyen pointed out to us his newly posted arxiv preprint \cite{Ng} in which he observes a version of our Corollary  \ref{c:invar1} by deriving it directly from our constructions in \cite{PT}.

\section{Preliminaries}\label{s:prelim}

 Let $f,g:(\bC^2,0)\to (\bC,0)$ be analytic function germs such that $f=g\circ \varphi$, where $\varphi:(\bC^2,0)\to (\bC^2,0)$ is a bi-Lipschitz homeomorphism. 

Consider a holomorphic map germ 
$$
\alpha :(\bC,0)\to(\bC^2,0), \quad \alpha(t)=(z(t),w(t)) \not \equiv 0.
$$
The image set germ $\alpha_*=\im(\alpha)$ is a \emph{curve germ} at $0\in \bC^2$, also called a \emph{holomorphic arc} at $0$. There is a well-defined tangent line $T(\alpha_*)$ at $0$,   $T(\alpha_*)\in\bC P^1$. 

In order to introduce our results, we need to recall here several notations and facts from \cite{PT} and \cite{MPT}.

\medskip

Let $\F$ be the field of convergent fractional power series in an indeterminate $y$. By the Newton-Puiseux Theorem we have that $\F$ is algebraically closed, cf \cite{BK}, \cite{Wa}.

A non-zero element of $\F$ has the form 
\begin{equation}\label{puis}
\eta(y)=a_0y^{n_0/N}+a_1y^{n_1/N}+a_2y^{n_2/N}+\cdots,\quad n_0<n_1<n_2<\cdots,
\end{equation}
where $a_i \in \bC^*$ and  $N,n_i\in \bN$ with $\gcd(N,n_0,n_1,\dots)=1$, $\lim \sup |a_i|^{\frac{1}{n_i}}<\infty$. The elements of $\F$ are called \emph{Puiseux arcs}. There are $N-1$ \emph{conjugates} of $\eta,$ which are the Puiseux arcs of the form
$$
\eta^{(k)}_{conj}(y):=\sum a_i\varepsilon^{kn_i}y^{n_i/N}, \quad \varepsilon:=e^{\frac{2\pi\sqrt{-1}}{N}},
$$
where $k\in\{0,\dots ,N-1\}$.

By the \emph{order of a Puiseux arc} \eqref{puis} we mean $\ord \eta(y):=\frac{n_0}{N}$, and by the \emph{Puiseux multiplicity} we mean $m_{puiseux}(\eta)=N$, cf \cite{BK}, \cite{Wa}. 

Let $\F_1:=\{\eta \in \F \mid \ord \eta (y)\geq 1\}$.   
For any $\eta \in \F_1$ with $\ord \eta (y)\geq 1$, the following map germ:
$$
\eta_{par} :(\bC,0)\to(\bC^2,0), \quad t\mapsto (\eta(t^N),t^N), \quad N:=m_{puiseux}(\eta),
$$
is holomorphic, and all the conjugates of $\eta$ lead to the same irreducible curve $\im\eta_{par}$, which will be denoted by $\eta_*$.

\begin{definition}[Contact order of holomorphic arcs, cf \cite{BK}, \cite{Wa} etc]\ \\
 The \emph{contact order} between two different holomorphic arcs $\alpha_*$ and $\beta_*$, where $\alpha, \beta \in \F_1$, is defined as: 
 $$
 \max \ord_{y} (\alpha'(y) - \beta'(y))
 $$
where the maximum is taken over all conjugates $\alpha'$ of $\alpha$ and $\beta'$ of $\beta$.
\end{definition}

%
\medskip

Let $f:(\bC^2,0)\rightarrow(\bC,0)$ be a holomorphic function germ, and let $m:= \ord_{0}f$. We say that $f$ is \emph{mini-regular in $x$ of order $m$},  if the initial form of the Taylor expansion of $f$ is not equal to $0$ at the point $(1,0)$, in other words $f_m(1,0)\neq 0$ where $f(x,y)=f_{m}(x,y)+f_{m+1}(x,y)+\hot$ is the homogeneous Taylor expansion of $f$.

Let $f:(\bC^2,0)\to (\bC,0)$ be a mini-regular holomorphic function in $x$. We have the following Puiseux factorisations of $f$, and of its derivative $f_x$ with respect to $x$:
$$
f(x,y)=u\cdot \prod_{i=1}^m(x-\zeta_i(y)), \quad f_x(x,y)=v\cdot \prod_{i=1}^{m-1}(x-\gamma_i(y)),
$$ 
where $u,v$ are units. \\ 
 If $f_x=g_1^{q_1} \cdots g_p^{q_p}$ is the decomposition into irreducible factors, then $\Gamma_i=\{g_i=0\}, i=1,\dots, p,$ if $g_i$ is not a factor of $f$ as well;  there exists at least one Puiseux root $\gamma$ of $f_x$ sudch that $g_i(\gamma(y),y)\equiv 0$. 

\medskip

\subsection{Gradient degree and gradient canyon.}\label{ss:degree}
Let $\gamma$ be a polar arc of $f$, thus such that $f(\gamma(y),y)\not\equiv 0$ and $f_x(\gamma(y),y) \equiv 0$. The \emph{gradient degree} $d(\gamma)$ is the smallest number $q$ such that 
$$
\ord_y(||\grad f(\gamma(y),y)||)=\ord_y(||\grad f(\gamma(y)+uy^q,y)||),
$$
holds for generic $u\in \bC$. The \emph{gradient canyon} $\GC(\gamma_*)$ is the subset of all curve germs $\alpha_*$, where $\alpha$ is a Puiseux arc of the form 
$$
\alpha(y):=\gamma +uy^{d(\gamma)}+\hot
$$
for any $u\in \bC$. It follows that all the polars in the same canyon have the same gradient degree, and therefore this is called the  \emph{canyon degree}. 

\medskip

\subsection{The order $\ord_y f(\gamma(y),y)$.}\label{ss:order-h}
   The order $h:=\ord_y f(\gamma(y),y)$, where $\gamma$ is a polar arc of $f$, is by definition a positive rational number. To each such order $h$, one associates at least one bar $B(h)$ in the Kuo-Lu tree of $f$, cf \cite{kuo-lu}, such that, if $\gamma$ grows from $B(h)$, then $h=\ord_y f(\gamma(y),y)$. The order $h$ is constant in the same canyon $\cC$,  see e.g. \cite{PT}, thus we may denote it by $h_{\cC}$.

\medskip

\subsection{Clusters of canyons.}\label{ss:clusters}
Let $G_\ell(f)$ be the subset of canyons tangent to the line $\ell$ of the tangent cone $\Cone_0(f)$ at the origin of the curve $Z(f)$. Let then $G_{\ell,d,B(h)}(f)\subset G_\ell(f)$ denote the union of gradient canyons of a fixed degree $d>1$, the  polars of which grow on the same bar $B(h)$.

\medskip

\subsection{Contact of canyons.}\label{ss:contact}

A fixed gradient canyon $\GC_i(f) \in G_{\ell,d,B(h)}(f)$ has a well-defined order of contact $k(i,j)$ with some other gradient canyon $\GC_j(f) \in G_{\ell, d,B(h)}(f)$ of the same cluster. This is, by definition, the contact order between some polar in $\GC_i(f)$ and some polar in $\GC_j(f)$.

The number $k(i,j)$ counts also the multiplicity of each such contact, that is, the number of canyons $\GC_j(f)$ of the cluster $G_{\ell,d,B(h)}(f)$ which have exactly the same contact with $\GC_i(f)$.

\medskip

\subsection{Sub-clusters of canyons.}\label{ss:subclusters}
Let $K_{\ell,d,B(h),i}(f)$ denote the (un-ordered) set of those contact orders $k(i,j)$ of the fixed canyon $\GC_i(f)$, counted with multiplicity.

Finally, let $G_{\ell,d,B(h),\omega}(f)$ be the union of canyons from $G_{\ell,d,B(h)}$ which have exactly the same set $\omega=K_{\ell,d,B(h),i}(f)$ of  orders of contact $\ge 0$ with the other canyons from $G_{\ell,d,B(h)}$. We then have a partition:
$$
G_{\ell,d,B(h)}(f)= \bigsqcup_{\omega} G_{\ell,d,B(h),\omega}(f).
$$

\medskip

\subsection{Lipschitz invariants and ``identity card''.}\label{ss:lipinvar}
By \cite{PT} and \cite{MPT}, for any degree $d>1$, any bar $B(h)$,  and any rational $h$, the following are bi-Lipschitz invariants:
\begin{enumerate}
\item the cluster of canyons $G_{\ell,d,B(h)}$,
\item the set of contact orders $K_{\ell,d,B(h),i}(f)$, and for each such set, the sub-cluster of canyons $G_{\ell,d,B(h),K_{\ell,d,B(h),i}}(f)$,
\item the bi-Lipschitz homeomorphism preserves the contact orders between any two clusters of type $G_{\ell,d,B(h),K_{\ell,d,B(h),i}}(f)$.
\end{enumerate} 

\medskip

On the other hand, let $\cC \in G_\ell(f)$ be a canyon, and let $\gamma$ be some polar arc in $\cC$, where $\ell$ is in the tangent cone of $Z(f)$. If $d_{\cC}$ denotes the gradient degree of $\cC$, then
$$
f(\gamma(y),y)=a_{h_{\cC}}y^{h_\cC}+\hot
$$
by \cite[Proposition 3.7.]{PT},  where $a_{h_{\cC}}$ and  $h_{\cC}$ depend only on the canyon. 

The main theorem of \cite{HP}, completed by \cite{PT},  tells that we also have the following bi-Lipschitz invariant:

\begin{enumerate}
\item[(d)] The effect of the bi-Lipschitz map $\varphi$ on each such couple $(d_{\cC}, a_{h_{\cC}})$ is the identity on $d_{\cC}$, and the multiplication of $a_{h_{\cC}}$ by $c^{- h_{\cC}}$, where $c$ is a certain non-zero constant which is the same for all canyons $\cC \in G_\ell(f)$.
\end{enumerate}

We say that the above bunch of data, which yield the described bi-Lipschitz invariants, constitute the ``identity card'' of $f$.  

\bigskip

\noindent \textbf{Acknowledgements}.
The authors acknowledge partial support from the grant ``Singularities and Applications'' - CF 132/31.07.2023 funded by the European Union - NextGenerationEU - through Romania's National Recovery and Resilience Plan. 
\emph{L.P.} and \emph{M.T.} acknowledge support by the PHC FASIC program, project no. 51448UE, funded by the French Ministry for Europe and Foreign Affairs, the French Ministry for Higher Education and Research and the Australian academic and institutional partners. 
\emph{M.T.} acknowledges support of the Laboratoire Painlevé and the R-CDP-24-004-C2EMPI project.

\section{Construction of bi-Lipschitz invariants}
In the following, we will assume that $f,g:(\bC^2,0)\to (\bC,0)$ are analytic function germs such that $f=g\circ \varphi$, where $\varphi=(\varphi_1,\varphi_2):(\bC^2,0)\to (\bC^2,0)$ is a bi-Lipschitz homeomorphism and $f$ is mini-regular in $x$.

\medskip

\noindent \textbf{Notations.} Let $F$ and $G$ be functions (possible multivaluate) of the variable $x\in \bR^n$. 

We say that $|F(x)|\lesssim |G(x)|$ or that \emph{``$F$ is $O(G)$''}, if there exists a constant $K>0$ such that the inequality $|F(x)|\leq K |G(x)|$ holds in some neighbourhood of the origin. 

We write:

$\bullet$  ``$|F|\sim |G|$'' if $|F(x)|\lesssim |G(x)|$ and $|G(x)|\lesssim |F(x)|$. 


$\bullet$  ``$F\ll G$'', and we say that \emph{``$F$ is $o(G)$''} if $\frac{|F(x)|}{|G(x)|}\to 0$, whenever $x\to 0$.

\subsection{Local variables}\label{ss:locvar0}\

\begin{lemma}\label{l:yY}
Let $\gamma$ be a polar arc of $f$ tangent to the line $\ell  \in \Cone_0(f)$, and let $\GC(\gamma_{*})\in G_\ell(f)$ be its canyon. There is a polar $\gamma'$ of $g$ such that its canyon $\GC(\gamma'_*)$ has the same degree $d$ as $\GC(\gamma_{*})$, and such that:
$$
\varphi(\gamma(y),y)=\bigl(\varphi_{1}(\gamma(y),y),\varphi_2(\gamma(y),y)\bigr)=\bigl(\gamma'(Y)+O( Y^{d}), Y\bigr),
$$
where $Y:= \varphi_2(\gamma(y),y)$ is a local variable.
\end{lemma}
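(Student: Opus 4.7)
The plan is to exploit the bi-Lipschitz invariance of gradient canyons from item (a) of Section \ref{ss:lipinvar} in order to locate the image curve $\varphi(\gamma_*)$ inside a canyon of $g$ of the same degree $d$, and then to read off the claimed approximation from the explicit horn-shape of this canyon as recalled in Section \ref{ss:degree}.

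First I would verify that $Y := \varphi_2(\gamma(y),y)$ is a genuine local parameter with $|Y|\sim |y|$. Since $\gamma$ has order at least $1$ and is tangent to $\ell \in \Cone_0(f)$, one has $\|(\gamma(y),y)\|\sim |y|$; combined with $\varphi(0)=0$ and the bi-Lipschitz property of $\varphi$, this yields $\|\varphi(\gamma(y),y)\|\sim |y|$. Because $\varphi$ preserves tangent cones, $\varphi(\gamma_*)$ has a well-defined tangent line in $\Cone_0(g)$; after possibly adjusting coordinates so that $g$ is mini-regular in its first variable (using that neither $\ell$ nor its image is the relevant vertical axis), one concludes $|\varphi_2(\gamma(y),y)|\sim |y|$, so that $y\mapsto Y$ is a bi-Lipschitz change of parameter.

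Next, I would apply invariant (a) of Section \ref{ss:lipinvar}: the cluster of canyons containing $\GC(\gamma_*)$ is mapped by $\varphi$ onto a cluster of canyons of $g$ of the same degree $d$, and $\GC(\gamma_*)$ itself is mapped to a canyon $\GC(\gamma'_*)$ of $g$, for some polar arc $\gamma'$ of $g$ chosen to represent it. In particular the image arc $\varphi(\gamma_*)$ lies inside $\GC(\gamma'_*)$. By the description in Section \ref{ss:degree}, every point of $\GC(\gamma'_*)$ whose second coordinate equals $Y$ differs from $(\gamma'(Y),Y)$ in its first coordinate by at most $O(|Y|^d)$. Applied to the point $\varphi(\gamma(y),y)=(\varphi_1(\gamma(y),y),Y)$, this yields $\varphi_1(\gamma(y),y) = \gamma'(Y)+O(Y^d)$, which is exactly the claim.

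The main technical obstacle is the passage from the Puiseux-based description of a canyon, which applies a priori only to holomorphic arcs $\gamma'+uY^d+\hot$, to the image arc $\varphi(\gamma_*)$, which is merely Lipschitz. This forces one to interpret $\GC(\gamma'_*)$ as the sub-analytic horn-region swept out by those generating Puiseux arcs, and to use the bi-Lipschitz invariance of this region itself rather than of the set of holomorphic generators. Once this is in place, the $O(Y^d)$ estimate on the first coordinate is automatic from containment in the horn, and the selection of $\gamma'$ with canyon degree $d$ follows from the degree being part of the canyon invariant.
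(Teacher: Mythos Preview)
Your approach is essentially the paper's: both arguments reduce to the fact, established in \cite{PT}, that $\varphi$ sends the horn region attached to $\GC(\gamma_*)$ into the horn region attached to some canyon $\GC(\gamma'_*)$ of $g$ of the same degree $d$, after which the estimate $\varphi_1(\gamma(y),y)=\gamma'(Y)+O(Y^d)$ is immediate from the defining inequality of $\Horn^{(d)}$. The paper makes this step precise by invoking \cite[Theorem~5.8]{PT} on canyon disks $D_f\subset\{f=\lambda\}$ and letting $\lambda\to 0$ to land in $\Horn^{(d)}(\gamma'_*;\varepsilon;\eta)$, whereas you appeal to invariant (a) of \S\ref{ss:lipinvar}. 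Be aware that (a) as stated is only the discrete statement that the \emph{cluster} $G_{\ell,d,B(h)}$ is preserved; it does not by itself say that the Lipschitz image of the arc $\gamma_*$ lands pointwise in a specific horn of $g$. What you actually need---and what you correctly identify in your ``technical obstacle'' paragraph---is precisely the pointwise canyon-disk result \cite[Theorem~5.8]{PT}, so once you cite that, your argument coincides with the paper's. Your verification that $|Y|\sim|y|$ is not carried out in the paper's proof of this lemma; it is deferred to Lemma~\ref{l:theconstant}.
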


Before giving the proof, we need to recall some more notations from \cite{PT}. 

\medskip

\noindent \textbf{Horn domains.}
For some fixed $\alpha_*$, where $\alpha$ is a Puiseux series as before, one defines the infinitesimal disks: 
$$
\mcd^{(e)}(\alpha_*;\rho):=\{\beta_*\mid \beta(y)=[J^{e}(\alpha)(y)+cy^e]+\hot,|c|\leq \rho\},
$$
where $1\leq e<\infty$, $\rho\geq 0$.

Consider $\mcd^{(e)}(\alpha_*;\rho)$ of finite order $e\geq 1$ and finite radius $\rho>0$, and a compact  ball $B(0;\eta):=\{(x,y)\in \bC^2 \mid \sqrt{|x|^2+|y|^2}\leq \eta\}$ with small enough $\eta >0$.  
Let then
$$
\Horn^{(e)}(\alpha_*;\rho;\eta):=\bigl\{ (x,y)\in B(0;\eta)\mid x=\beta(y)=J^e(\alpha)(y)+cy^e, |c|\leq \rho\bigr\}
$$
be the \emph{horn domain} associated to $\mcd^{(e)}(\alpha_*;\rho)$. We note that  $(x,y)\in \Horn^{(e)}(\alpha_*;\rho;\eta)$ if and only if $|x-J^e(\alpha (y))|\leq \rho |y^e|$, in particular  $x=\alpha (y) + O(y^e)$.

\medskip

\noindent \textbf{Canyon disks within the Milnor fibre.}
Let $\mcd^{(e)}_{\gamma_*,\varepsilon}(\lambda;\eta)$ be the union of disks\footnote{By \emph{disk} we mean \emph{homeomorphic to an open disk}.} in the Milnor fibre $\{f=\lambda\}\cap B(0;\eta)$ of $f$ defined as follows  
$$
\mcd^{(e)}_{\gamma_*,\varepsilon}(\lambda;\eta):=\{f=\lambda\} \cap \Horn^{(e)}(\gamma_*;\varepsilon;\eta),
$$
for some rational $e$, and 
some small enough $\varepsilon >0$. 

We will only refer to $\mcd^{(e)}_{\gamma_*,\varepsilon}(\lambda;\eta)$ such that the canyon degree 
of $\cC(\gamma_*)$  is $d >1$.  If two polars are in the same canyon, then their associated disks coincide by definition.

 Let $D_{f}$ be some disk of the union $\mcd^{(e)}_{\gamma_*,\varepsilon}(\lambda;\eta)$.   
%

By \emph{canyon disk} (cf. \cite[\S 5]{PT}) we shall mean in the following such a disk $D_{f}$.

\subsection{Proof of Lemma \ref{l:yY}.}
The bi-Lipschitz homeomorphism
 $\varphi$ maps the Milnor fibre $\{f=\lambda\}\cap B(0;\eta)$ into the Milnor fibre $\{g=\lambda\}\cap B(0;\eta')$, for convenient choices of the Milnor ball radii $\eta$ and $\eta'$. 
Let $D_f$ be a canyon disk associated with canyon $\GC(\gamma_*)$. Then, by \cite[Theorem 5.8]{PT}, there exists a polar arc $\gamma'$ of $g$ such that $\varphi(D_f)=D_g$, where $D_g$ is a canyon disk associated with canyon $\GC(\gamma'_*)$. Making $\lambda \to 0$, we get that $\varphi(J^d(\gamma(y)),y)\in \Horn^{(d)}(\gamma'_*;\varepsilon;\eta),$ 
for some small enough $\varepsilon$ and $\eta$.
This means that we have:
$$ \bigl| \varphi_1\bigl( \gamma(y),y \bigr) -\gamma'(Y) \bigr|= \bigl\| \varphi \bigl(\gamma(y),y\bigr) - \bigl(\gamma'(Y), Y\bigr) \bigr\| =O(| Y^{d}|),
$$
where $Y:= \varphi_2\bigl(\gamma(y),y\bigr)$. 
\fin

\bigskip

\begin{lemma}\cite{HP}, \cite{PT}. \label{l:theconstant}  \ \\
Let $f,g:(\bC^2,0)\to (\bC,0)$ be analytic function germ such that $f=g\circ \varphi$, where $\varphi=(\varphi_1,\varphi_2):(\bC^2,0)\to (\bC^2,0)$ is a bi-Lipschitz homeomorphism. Let $\gamma$ be some polar arc of $f$ tangent to a line $\ell  \in \Cone_0(f)$.  

Then we have $|\varphi_2(\gamma(y),y)|\sim |y|$.  
Moreover, if $d_\gamma >1$, then for all polars $\bar\gamma$ of $f$ such that $\ord\bigl(\gamma(y)-\bar\gamma(y)\bigr)>1$ we have the equality: 
$$\varphi_2\bigl(\bar\gamma(y),y\bigr)=cy+o(y)$$
 with the same constant $c\in \bC^*$.
\end{lemma}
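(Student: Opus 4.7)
The statement splits into two assertions.

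For the first assertion, $|\varphi_2(\gamma(y),y)|\sim |y|$ is a direct consequence of Lemma \ref{l:yY} combined with the bi-Lipschitz property of $\varphi$. By Lemma \ref{l:yY}, one has $\varphi(\gamma(y),y)=(\gamma'(Y)+O(Y^d),Y)$ where $Y:=\varphi_2(\gamma(y),y)$ and $\gamma'$ is a polar of $g$ tangent to a line, hence $\gamma'(Y)=O(Y)$. This gives $\|\varphi(\gamma(y),y)\|\sim|Y|$. On the other hand, since $\gamma$ is tangent to $\ell$, $\|(\gamma(y),y)\|\sim|y|$, and combining with the bi-Lipschitz comparison $\|\varphi(\gamma(y),y)\|\sim\|(\gamma(y),y)\|$, we conclude $|Y|\sim|y|$.

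For the moreover clause, the heart of the argument is the existence of the scaling constant $c$ along $\gamma$ itself: $\varphi_2(\gamma(y),y)=cy+o(y)$ with $c\in\bC^*$. This is the content of Henry-Parusi\'nski \cite{HP}, refined in \cite{PT}, and is the main obstacle. The strategy there is to analyze the action of $\varphi$ on the canyon-disk structure of the Milnor fibres of $f$: the hypothesis $d_\gamma>1$ allows one to discard the $O(Y^d)$ remainder in Lemma \ref{l:yY}, and to extract a well-defined linear asymptotic of $\varphi$ along the tangent direction of $\gamma$. Non-vanishing $c\neq 0$ then follows at once from the first assertion, since $\varphi_2(\gamma(y),y)=o(y)$ would contradict $|\varphi_2(\gamma(y),y)|\sim|y|$.

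Granting the existence of $c$ for $\gamma$, the extension to an arbitrary polar $\bar\gamma$ with $\ord(\gamma-\bar\gamma)>1$ is immediate. Indeed, the hypothesis gives $|\gamma(y)-\bar\gamma(y)|=o(|y|)$, so by the Lipschitz constant $L$ of $\varphi$:
\[
\bigl|\varphi_2(\bar\gamma(y),y)-\varphi_2(\gamma(y),y)\bigr|\leq L\,\|(\bar\gamma(y),y)-(\gamma(y),y)\|=L\,|\bar\gamma(y)-\gamma(y)|=o(|y|).
\]
Combined with $\varphi_2(\gamma(y),y)=cy+o(y)$, this forces $\varphi_2(\bar\gamma(y),y)=cy+o(y)$ with the \emph{same} constant $c$.

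The principal difficulty is therefore the existence of the linear asymptotic $c$ for the fixed polar $\gamma$. This is where the hypothesis $d_\gamma>1$ is essential: when $d_\gamma=1$, the remainder $O(Y^d)$ in Lemma \ref{l:yY} is of the same order as $Y$ itself and no linear asymptotic of $\varphi$ along $\gamma$ can be extracted by these methods; the universality of $c$ across nearby polars, by contrast, is a routine consequence of the Lipschitz estimate above once $c$ is produced.
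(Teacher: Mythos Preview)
Your decomposition into three pieces---the estimate $|Y|\sim|y|$, the existence of the limit $c$ along $\gamma$, and the propagation of $c$ to nearby polars $\bar\gamma$---matches the paper's structure, and your third step is exactly the paper's. Your first step differs: you invoke Lemma~\ref{l:yY} directly, while the paper instead picks a root $\zeta$ of $f$, observes that $\varphi$ sends it to a root of $g$ whose tangent direction is not $[1:0]$ (by mini-regularity of $g$, arranged explicitly at the outset), deduces $|\varphi_2(\zeta(y),y)|\sim|y|$, and then gets $|\varphi_2(\gamma(y),y)|\sim|y|$ from the Lipschitz bound $|\varphi_2(\zeta(y),y)-\varphi_2(\gamma(y),y)|\lesssim|y|^{k}$. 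Your route is shorter but tacitly needs $\ord\gamma'\geq 1$ (i.e.\ the same mini-regularity of $g$), and it relies on Lemma~\ref{l:yY}, whose proof in the paper uses canyon disks and hence $d>1$; the paper's route for the first assertion avoids that hypothesis.

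The genuine gap is the existence of the limit $c=\lim_{y\to 0}Y/y$ for the fixed polar $\gamma$. You defer to \cite{HP}, \cite{PT} and describe the mechanism as ``the action of $\varphi$ on the canyon-disk structure of the Milnor fibres''. That is not how $c$ is produced here; canyon disks were already spent in proving Lemma~\ref{l:yY}. The paper's argument is an elementary comparison of expansions: from $f=g\circ\varphi$ and Lemma~\ref{l:yY} one has
\[
ay^{h}+\cdots+O(y^{h+d-1})=f(\gamma(y),y)=g\bigl(\gamma'(Y)+O(Y^{d}),Y\bigr)=a'Y^{H}+\cdots+O(Y^{H+d-1}),
\]
with all displayed terms depending only on the respective canyons. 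One reads off $h=H$, and since $d>1$ gives $h+d-1>h$, equating leading terms yields $ay^{h}=a'Y^{h}+o(Y^{h})$, hence $Y/y\to c$ with $c^{h}=a/a'\neq 0$. This is the missing idea: the constant $c$ is extracted from the \emph{scalar} equation $f(\gamma(y),y)=(g\circ\varphi)(\gamma(y),y)$ together with the canyon-invariance of the low-order coefficients, not from further Milnor-fibre geometry.
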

\begin{proof}
We may choose the coordinates in $\bC^2$ such that both $f$ and $g$ are mini-regular in $x$, i.e., that the tangent cones of $f$ and $g$ do not contain the direction $[1:0]$, see the definition in \S\ref{s:prelim}. By our assumptions, the polar $\gamma$ is tangential, i.e., its tangent is included in $\Cone_0(f)$.  This means that $\gamma$ has contact order $k\geq1$ with some root $\zeta$ of $\{f=0\}$, in other words:  
 $$\|(\zeta(y),y)-(\gamma(y),y)\|\sim |y|^k.$$
By the bi-Lipschitz property,  there are constants $0< m < K$ such that in the neighbourhood of the origin we have:
\begin{equation}\label{eq:bounds}
m\|(\zeta(y),y)-(\gamma(y),y)\|\leq \|\varphi(\zeta(y),y)-\varphi(\gamma(y),y)\|\leq K\|(\zeta(y),y)-(\gamma(y),y)\|.
\end{equation}
 
  Since $\varphi$ is  bi-Lipschitz, we have: 
\begin{equation}\label{lem_2prop}
\|\varphi(\zeta(y),y)\|\sim \|(\zeta(y),y)\|\sim |y|.
\end{equation}
We write:
$$
\| (\varphi_1(\zeta(y),y),\varphi_2(\zeta(y),y)) \|=|\varphi_2(\zeta(y),y)| \left\Vert \left(\frac{\varphi_1(\zeta(y),y)}{\varphi_2(\zeta(y),y)},1 \right) \right\Vert.
$$
Since $f=g\circ \varphi$, the root $\zeta$ is sent by $\varphi$ to some root $\eta=(\eta_1,\eta_2)$ of $g$, which means that we have the equality of directions $\left[\frac{\varphi_1(\zeta(y),y)}{\varphi_2(\zeta(y),y)}:1 \right] = \left[\frac{\eta_1}{\eta_2}:1 \right]$. The later tends to the direction of the tangent line $\eta$, which is different from $[1:0]$ by our assumption. Hence this is of the form $[a,1]$, where $a\in \bC$. Consequently:
$$
|\varphi_1(\zeta(y),y)|\leq M|\varphi_2(\zeta(y),y)|,  \ \text{ for some } M>0
$$

By using \eqref{lem_2prop}, we then get:
$$
m|y|\leq |\varphi_2(\zeta(y),y)|+|\varphi_1(\zeta(y),y)|\leq (1+M)|\varphi_2(\zeta(y),y)|,
$$


and  further, by using \eqref{eq:bounds}, this implies:
 
\begin{equation}\label{eq:simy}
 |\varphi_2(\gamma(y),y)|\sim |y| .
\end{equation}

 Note that property \eqref{eq:simy} is independent of the choice of the polar 
 $\gamma$.

By \S \ref{ss:degree} and \S \ref{ss:order-h}, we have the expansion:
\begin{equation}\label{eq:fh}
  f(\gamma(y),y) =ay^h+\cdots + \alpha y^{h+d-1} +\hot,
\end{equation}
where $d=d_\gamma$ is the degree of the gradient canyon $\GC(\gamma_{*})$, and where all the terms before $\alpha y^{d+h-1}$ depend only on the canyon $\GC(\gamma_{*})$.
 
 Lemma \ref{l:yY} provides a polar $\gamma'$ of $g$ such that, for $Y:= \varphi_2\bigl(\gamma(y),y\bigr)$, we also have:
 $$(g\circ \varphi)(\gamma(y),y) = g\bigl(\gamma'(Y)+O(Y^{d}), Y\bigr)
  =a'Y^H+\cdots +O(Y^{H+d-1} ),$$
 where all the terms before $O(Y^{d+H-1})$ depend only on the canyon $\GC(\gamma'_*)$.

We obtain the equality:
\begin{equation}\label{eq:expansions}
 ay^h+\cdots + O(y^{h+d-1} )= a'Y^H+\cdots +O(Y^{H+d-1}).
\end{equation}\label{limit}
 In particular we obtain that  $h=H$.  When $d>1$,  we have $h+d-1 >h$, and therefore we also get:
 \begin{equation}\label{eq:limit}
 Y/y\to c\neq 0,  \text{ when } y\to 0, 
\end{equation}
thus we get  $Y=cy+o(y)$.

Suppose now that $\bar\gamma$ is another polar arc of $f$, tangent to the line $\ell \in \Cone_0(f)$, such that $\GC(\gamma_*)\neq \GC(\bar\gamma_*)$, and let $\bar Y :=\varphi_2(\bar\gamma(y),y)$. 
Using the bi-Lipschitz property of $\varphi$, we have:
$$
|Y- \bar Y|=\|\varphi_2(\gamma(y),y)-\varphi_2(\bar\gamma(y),y)\| \leq \|(\gamma(y),y)-(\bar\gamma(y),y)\|.
$$
Since $\gamma$ and $\bar\gamma$ satisfy
$\ord \|(\gamma(y),y)-(\bar\gamma(y),y)\| > 1$,  this implies that  $Y- \bar Y=o(y)$.  
 \end{proof}

 \subsection{First level bi-Lipschitz invariant}\label{ss:firstlevel}
 
 The  main theorem of \cite{HP} gives the first level continuous Lipschitz invariant. It is a direct consequence of  the above lemma, as an application of \eqref{eq:limit}. Since this will be used in the following,  we state it here (in an equivalent version),  and give a short proof:
 
\begin{corollary}[\cite{HP} main theorem, equivalent version]\label{c:HPmain}\ \\
Let $f,g :(\bC^2,0)\to (\bC,0)$ be holomorphic functions such that $f=g\circ \varphi$, where $\varphi:(\bC^2,0)\to (\bC^2,0)$ is a bi-Lipschitz homeomorphism and $f$ is mini-regular in $x$. Let $\gamma$ be some polar arc of $f$ tangent to a line $\ell  \in \Cone_0(f)$, and let $a$, $h$  be the coefficient and the order of the leading term of $f(\gamma(y),y)$. 

Then the effect  of the bi-Lipschitz map $\varphi$ on $(h,a)$ is the identity on $h$, and the multiplication of $a$ by  $c^{h}$, where $c$ is the non-zero constant provided by Lemma \ref{l:theconstant}.

\end{corollary}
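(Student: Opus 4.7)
The corollary is a direct extraction of the top two coefficients from equation \eqref{eq:expansions} via the asymptotic \eqref{eq:limit}, both already established inside the proof of Lemma \ref{l:theconstant}; essentially nothing new has to be proved, only read off.

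First I apply Lemma \ref{l:yY} to the given polar $\gamma$ of $f$ to produce a polar $\gamma'$ of $g$ of the same canyon degree $d$, satisfying $\varphi(\gamma(y),y) = (\gamma'(Y) + O(Y^{d}), Y)$ with $Y := \varphi_{2}(\gamma(y),y)$. Substituting into $f = g\circ\varphi$, and using the expansion \eqref{eq:fh} on the left together with the analogous expansion of $g$ along $\gamma'$ on the right, produces equation \eqref{eq:expansions}:
\[
a y^{h} + \cdots + O(y^{h+d-1}) \;=\; a' Y^{H} + \cdots + O(Y^{H+d-1}).
\]
The right-hand side relies on \cite[Proposition 3.7]{PT}: since $\gamma'$ is a polar arc of $g$, the derivative $g_{x}$ vanishes along $\gamma'$, so the perturbation $O(Y^{d})$ disturbs only terms of order $\geq H+d-1$ and leaves the first $d-1$ coefficients of the expansion of $g(\gamma'(Y),Y)$ intact. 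Lemma \ref{l:theconstant} next yields $Y = cy + o(y)$ with $c \in \bC^{*}$, so the leading term on the right becomes $a' c^{H} y^{H} + o(y^{H})$. Matching with $a y^{h} + o(y^{h})$ on the left forces $h = H$ (the identity on $h$) and the multiplicative relation $a = c^{h} a'$, which is the announced effect on the coefficient $a$.

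The argument is essentially bookkeeping, so there is no real technical obstacle at this step — the analytic work is entirely packaged into Lemmas \ref{l:yY} and \ref{l:theconstant}. The one feature worth stressing is that, by Lemma \ref{l:theconstant}, the scalar $c$ does not depend on the particular polar $\gamma$ tangent to $\ell$: it is a single constant attached to the bi-Lipschitz map $\varphi$ together with the tangent line $\ell$. This universality is exactly what elevates $a \mapsto c^{h} a$ from a pointwise asymptotic, which would be essentially empty, to a genuine bi-Lipschitz invariant of the entire cluster $G_{\ell}(f)$.
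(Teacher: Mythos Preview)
Your proof is correct and follows essentially the same route as the paper's: both expand $f(\gamma(y),y)$ and $g(\gamma'(Y),Y)$ along the corresponding polars, invoke Lemma \ref{l:theconstant} for $Y = cy + o(y)$, and equate leading terms to read off $h=H$ and $a = c^{h} a'$ (equivalently $a' = ac^{-h}$). Your added remarks on why the $O(Y^{d})$ perturbation leaves the low-order terms intact and on the universality of $c$ across $G_{\ell}(f)$ are helpful but do not change the argument.
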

\begin{proof}
Let $d>1$ denote the degree of the canyon $\GC(\gamma_*)$.
 By \S\ref{ss:degree}  and \S\ref{ss:order-h}, we have the following:
\begin{equation}\label{eq:3.0}
\begin{aligned}
f(\gamma(y),y)&=ay^{h}+\cdots +O(y^{h+d-1}),\\
\end{aligned}
\end{equation}
where the terms of orders less than $h+d-1$ are identical for every arc in $\GC(\gamma_{*})$ . Similarly, after applying $\varphi$, by Lemma \ref{l:theconstant}, we obtain:
\begin{equation}\label{eq:4.0}
\begin{aligned}
(g\circ \varphi)(\gamma(y),y)&=bY^{h}+\cdots +O(Y^{h+d-1}),\\
\end{aligned}
\end{equation}
where,  by Lemma \ref{l:theconstant}, $Y$ is a  local variable, and the terms of orders less than $h+d-1$ are the same for every arc in $\GC(\gamma'_{*})$. 

Since $f=g \circ \varphi$, by comparing \eqref{eq:3.0} to \eqref{eq:4.0}, we get:
\begin{equation}\label{eq:5.0}
\begin{aligned}
a y^{h}+\cdots +O(y^{h+d-1})&=b Y^{h}+\cdots +O(Y^{h+d-1}),\\
\end{aligned}
\end{equation}
and by applying the substitution  \eqref{eq:1} provided by Lemma \ref{l:theconstant}, we obtain the equality:
\begin{equation}\label{eq:coeff}
  b=ac^{-h},
\end{equation}
 where $c$ is the same constant for all canyons in $G_\ell(f)$, as established in Lemma \ref{l:theconstant}.  
\end{proof}
 
\

Our next result refines in more depth the equality in Lemma \ref{l:theconstant},  up to a level depending on the involved canyons.  It plays a crucial role in Example \ref{example2}.

\begin{theorem}\label{t:coeff}
Let $d\in \bQ_{+}$ be the degree of the canyon $\GC(\gamma_{*})$.

 For any polar arc $\gamma' \in \GC(\gamma_{*})$, we have:
 \[ \varphi_{2}(\gamma'(y),y) = P(y) + \phi(y)
 \]
where  $P(y) = cy + \cdots$  is a certain finite sum of monomials with rational exponents less than $d-1$, 
 which is well determined by $f$ and $g$.
\end{theorem}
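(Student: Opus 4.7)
The plan is to use the functional equation $f(\gamma'(y), y) = g(\varphi(\gamma'(y), y))$ together with the canyon-determined expansions from \cite[Proposition~3.7]{PT} in order to solve iteratively for $Y := \varphi_2(\gamma'(y), y)$ as a Puiseux series in $y$, and then to read off that the leading portion $P(y)$ of this series is independent of the specific choice of polar $\gamma' \in \GC(\gamma_*)$.

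First I will apply Lemma \ref{l:yY} to each polar $\gamma' \in \GC(\gamma_*)$ to produce a polar $\gamma''$ of $g$ of the same canyon degree $d$, satisfying $\varphi(\gamma'(y), y) = (\gamma''(Y) + O(Y^d), Y)$. A key preliminary to establish is that the perturbation $O(Y^d)$ in the first slot contributes only at order $\geq h + d - 1$: since $g_x(\gamma''(Y), Y) \equiv 0$ and the definition of gradient degree forces $\ord g_{xx}(\gamma''(Y), Y) = h - 1 - d$, a second-order Taylor expansion gives
$$
g(\gamma''(Y) + O(Y^d), Y) = g(\gamma''(Y), Y) + O(Y^{h+d-1}).
$$
In parallel, by \cite[Theorem~5.8]{PT}, the canyon disk attached to $\GC(\gamma_*)$ is sent by $\varphi$ onto a single canyon disk of $g$, so all the polars $\gamma''$ obtained as $\gamma'$ varies within $\GC(\gamma_*)$ lie in a common canyon $\GC(\gamma''_*)$ of $g$ that depends only on $\GC(\gamma_*)$.

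Combining these two facts with \cite[Proposition~3.7]{PT}, the functional equation modulo $y^{h+d-1}$ reduces to
$$
a y^h + \sum_k a_k y^{h + r_k} \equiv a' Y^h + \sum_k a'_k Y^{h + r'_k} \pmod{y^{h+d-1}},
$$
in which the coefficients and the exponents $0 < r_k, r'_k < d-1$ depend only on $\GC(\gamma_*)$ and $\GC(\gamma''_*)$. I will then solve this equation for $Y$ recursively: writing $Y = cy + W$ and expanding each $Y^{h+r}$ to first order in $W$, the lowest order fixes $c$ as in Lemma \ref{l:theconstant}, and each subsequent monomial $c_j y^{\sigma_j}$ of $W$ is uniquely determined by matching at order $h - 1 + \sigma_j$, as long as this order remains strictly below $h + d - 1$. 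The recursion terminates after finitely many steps and produces the desired decomposition $Y = P(y) + \phi(y)$, with $P(y) = c y + c_1 y^{\sigma_1} + \cdots + c_N y^{\sigma_N}$ a finite Puiseux polynomial whose exponents satisfy the bound in the statement.

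Because every step of this recursion uses only canyon-determined data on both sides of the equation, the polynomial $P(y)$ so produced depends solely on $f$, $g$, and the canyon $\GC(\gamma_*)$, and in particular not on the specific polar $\gamma' \in \GC(\gamma_*)$. The main technical obstacle will be the careful bookkeeping inside the recursive matching: at each step one must correctly collect the contributions of the already-determined lower monomials of $W$ coming from higher-order terms in the binomial expansions of $Y^h$ and of the mixed powers $Y^{h+r'_k}$, and one must verify that the threshold at which the recursion terminates coincides precisely with the canyon-determined range $< h+d-1$ of the two expansions.
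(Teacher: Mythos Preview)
Your proposal is correct and follows essentially the same route as the paper: both reduce to the canyon-invariant identity between the truncated expansions of $f(\gamma'(y),y)$ and $g(\gamma''(Y),Y)$ modulo order $h+d-1$, and then solve for the Puiseux monomials of $Y$ iteratively by first-order expansion in the correction $\xi(y)=Y-cy$, the paper carrying out the first step via an explicit three-case comparison of the second exponents on each side before inducting. Your added preliminary justifications --- that the $O(Y^{d})$ perturbation in the first coordinate contributes only at order $\geq h+d-1$, and that the target canyon $\GC(\gamma''_{*})$ of $g$ is independent of the choice of $\gamma'\in\GC(\gamma_{*})$ --- are steps the paper leaves implicit; note only that the gradient degree forces merely $\ord_{Y} g_{xx}(\gamma''(Y),Y)\geq h-1-d$ rather than equality, but this inequality already suffices for your Taylor estimate.
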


 %
\begin{proof}
By Lemma \ref{l:theconstant} we have:
$$
Y= \varphi_2\bigl(\gamma(y),y\bigr) = cy+ \xi(y).$$

We claim that $\xi(y) = r_{1}y^{\beta_{1}} +o(y^{\beta_{1}})$, where $r_{1}\in \bC$ and $\beta_{1}\in \bQ_{+}$, $ \beta_{1} > 1$, are well determined
by the coefficients of the expansions of $f$ and $g$ displayed in \eqref{eq:expansions}, which are invariant in the respective canyons. 

\

Let $h+q$ and $h+q'$ denote the second exponent in the expansion at the left hand side, and at the right hand side of \eqref{eq:expansions}, respectively, where $q,q' \in \bQ_{+}$ and $q,q'<d-1$.  Then \eqref{eq:expansions} reads:
%
\begin{equation}\label{proof:1}
 ay^h+by^{h+q}+ \cdots + O(y^{h+d-1}) = a'Y^h+b'Y^{h+q'}+\cdots +O(Y^{h+d-1}) 
\end{equation}
 By Lemma \ref{l:theconstant} we have $a'=a\frac{1}{c^h}$, in particular $Y-cy=\xi(y)=o(y)$. 
 Then, by replacing $Y$ in \eqref{proof:1}, we get:
\begin{equation}\label{eq:coeffandexpo}
 ay^h+by^{h+q}+o(y^{h+q}) =a\frac{1}{c^h}( cy+\xi(y))^h+b'( cy+\xi(y))^{h+q'}+o(y^{h+q'}),
\end{equation}

One develops:
$$\left( cy+\xi(y)  \right)^h = c^hy^h + c^{h-1}hy^{h-1}\xi(y)+ \hot, $$
and thus the right hand side of \eqref{eq:coeffandexpo} becomes: 
$$ay^h + a\frac{1}{c}hy^{h-1}\xi(y) + b'c^{h+q'}y^{h+q'}   +o(y^{h+q'}) $$
After reducing the first terms $ay^h$, the equality \eqref{eq:coeffandexpo} becomes:
\begin{equation}\label{proof:2}
by^{h+q}+o(y^{h+q}) =  a\frac{1}{c}hy^{h-1}\xi(y)+ o\bigl(y^{h-1}\xi(y) \bigr)  + b'c^{h+q'}y^{h+q'}   +o(y^{h+q'})
\end{equation}
The study of this equality falls into 3 cases, as follows.\\

\noindent \textbf{Case 1:} $q< q'$.  Then the limit $\lim_{y\to 0} \frac{\xi(y)}{y^{q+1}}$ is well defined, and we deduce:
 $$\xi(y)=\frac{bc}{ah}y^{q+1}+ o(y^{q+1}).$$ 

\

\noindent \textbf{Case 2:} $ q=q'$. Then the limit
$
\lim_{y\to 0}\frac{\xi(y)}{y^{q+1}} = \frac{c(b-b'c^{h+q})}{ah}$
is well defined, and therefore: 
$$\xi(y)=\frac{c(b-b'c^{h+q})}{ah}y^{q+1}+o(y^{q+1}).$$

\noindent \textbf{Case 3:} $q>q'$. 
Then  $a\frac{1}{c}h\lim_{y\to 0}\frac{\xi(y)}{y^{q'+1}}+ b'c^{h+q'}=0$, and therefore
$$\xi(y)=\frac{-b'c^{h+q'+1}}{ah}y^{q'+1}+ o(y^{q'+1})$$

This ends the proof of our claim on the function $\xi(y)$.
Let us call ``Step 1'' this determination of its first term $r_{1}y^{\beta_{1}}$ of $\xi(y)$.   Next, we set:
$$
Y=(cy+ r_{1}y^{\beta_{1}}) + \xi'(y)
$$ 
and we claim that we have $\xi'(y) =  r_{2}y^{\beta_{2}}  + o(y^{\beta_{2}})$, where  $r_{2}\in \bC$,  $\beta_{2}\in \bQ_{+}$, and $\beta_{2} > \beta_{1}$. 

Like in Step 1, we then do a similar case-by-case study of the equality \eqref{proof:1} in which we replace $Y$. This will involve the first two terms of both sides. The determination of the first two terms will be called ``Step 2'', and we continue with another step.
The last step $k$ is defined by the last time when the highest exponent $\beta_{k}$ may influence the terms $<d+h-1$ of the expansions \eqref{proof:1}. This happens when, in the term $c^{h}y^{h}r_{k}\beta_{k}$ of the expansion of $Y^{h}$, the exponent of $y$ is less than $h+d-1$, and therefore we get $\beta_{k}< d-1$.

Altogether, we have shown how to determine the polynomial expression $P(y)$ with rational coefficients. This finishes the proof of our theorem.
\end{proof}

\medskip


Let $\gamma_1$ and $\gamma_2$ be polar arcs of $f$ such that $\GC(\gamma_{1*})\neq \GC(\gamma_{2*})$ and that $\GC(\gamma_{1*}),\GC(\gamma_{2*})\in G_\ell(f)$.  By Lemma \ref{l:yY}, there are polars $\gamma'_1$, $\gamma'_2$ of $g$ such that their canyons $\GC(\gamma'_{1*})$, $\GC(\gamma'_{2*})$ have the same degrees $d_1$, $d_2$ as $\GC(\gamma_{1*})$, $\GC(\gamma_{2*})$, respectively, and such that:

$$
\varphi(\gamma_1(y),y)=(\varphi_1(\gamma_1(y),y),\varphi_2(\gamma_1(y),y))=(\gamma'_1(Y_1)+O(Y_1^{d_1}),  Y_1)
$$
and
$$
\varphi(\gamma_2(y),y)=(\varphi_1(\gamma_2(y),y),\varphi_2(\gamma_2(y),y))=(\gamma'_2(Y_2)+O(Y_2^{d_2}),  Y_2)
$$
where $Y_1$, $Y_2$ are local variables.

\medskip
%
%

By Lemma \ref{l:theconstant} we have:
\begin{equation}\label{eq:1}
Y_i=cy+o(y), \quad i=1,2,
\end{equation}
where $c$ is the same constant for all canyons in $G_\ell(f)$.

\medskip
We are now prepared to introduce a new class of bi-Lipschitz invariants associated with pairs of gradient canyons. 

\subsection{Higher invariants: second level}\label{ss:1st level}

We assume that there are at least two distinct gradient canyons contained in $G_\ell(f)$, $\GC(\gamma_{1*})$ and $\GC(\gamma_{2*})$, of canyon degrees $d_1$ and $d_2$, respectively. If $\delta \ge 1$ denotes the contact order between them, then this must be lower than both, i.e.:
\begin{equation}\label{eq:lower}
 \delta < \min\{d_1, d_2\}.
\end{equation}
 By \cite{PT}, the corresponding canyons of $g$, denoted by  $\GC(\gamma'_{1*})$ and $\GC(\gamma'_{2*})$, have the same degrees and the same contact order $\delta$, cf \S\ref{ss:degree} and \S\ref{ss:contact}.

In order to state the theorem, we now assume that the orders are equal: 
$$\ord f(\gamma_1(y),y) = \ord f(\gamma_2(y),y),$$
 and we  introduce the following notation:

\begin{itemize}
\item $a_1$ is the coefficient of $y^{h}$ in the expansion of $f(\gamma_1(y),y)$,
\item $a_2$ is the coefficient of $y^{h}$ in the expansion of $f(\gamma_2(y),y)$, 
\item $H=\ord \left[\frac{1}{a_1}f(\gamma_1(y),y)-\frac{1}{a_2}f(\gamma_2(y),y) \right]$,
\item $\tilde{a}_1$ is the coefficient of $y^{H}$ in the expansion of $\frac{1}{a_1}f(\gamma_1(y),y)$,
\item $\tilde{a}_2$ is the coefficient of $y^{H}$ in the expansion of $\frac{1}{a_2}f(\gamma_2(y),y)$.
\end{itemize}

With these notations, we have\footnote{After the publication of this manuscript as an arXiv preprint,  N. Nguyen pointed up to us his manuscript \cite{Ng} containing a statement which seems equivalent to our Theorem \ref{c:invar1}.}:
\begin{corollary}\label{c:invar1}
Let $f,g :(\bC^2,0)\to (\bC,0)$ be holomorphic functions such that $f=g\circ \varphi$, where $\varphi:(\bC^2,0)\to (\bC^2,0)$ is a bi-Lipschitz homeomorphism and $f$ is mini-regular in $x$. Let $h=\ord_y f(\gamma_1(y),y) = \ord_y f(\gamma_2(y),y)$.
If 
 \begin{equation}\label{eq:H-cond}
  H<  h+\delta-1,
\end{equation}
 then the effect  of the bi-Lipschitz map $\varphi$ on the pair $\bigl( H, (\tilde{a}_1-\tilde{a}_2)\bigr)$ is the identity on $H$, and  $(\tilde{a}_1-\tilde{a}_2)$ multiplies by $c^{h-H}$.
\end{corollary}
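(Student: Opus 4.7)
The plan is to use $f = g \circ \varphi$ together with Lemma \ref{l:yY} to re-express $f(\gamma_i(y), y)$ via the canyon expansion of $g$ along the image polar $\gamma'_i$, and then to extract the leading term of the difference for $i = 1, 2$. Set $\tilde{F}_i(y) := \frac{1}{a_i}\, f(\gamma_i(y), y)$, so that $\tilde{F}_1(y) - \tilde{F}_2(y) = (\tilde{a}_1 - \tilde{a}_2)\, y^H + o(y^H)$ by the definition of $H$ and $\tilde{a}_i$. On the $g$-side, let $b_i$ denote the coefficient of $Y^h$ in $g(\gamma'_i(Y), Y)$ and set $\tilde{G}_i(Y) := \frac{1}{b_i}\, g(\gamma'_i(Y), Y)$; by Corollary \ref{c:HPmain}, $b_i = a_i c^{-h}$. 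The canyon-invariance bound $g(\gamma'_i(Y) + O(Y^{d_i}), Y) - g(\gamma'_i(Y), Y) = O(Y^{h+d_i-1})$, combined with Lemma \ref{l:yY}, delivers the key identity
\[
\tilde{F}_i(y) \;=\; c^{-h}\, \tilde{G}_i(Y_i) \;+\; O(y^{h+d_i-1}), \qquad Y_i := \varphi_2(\gamma_i(y), y).
\]

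Subtracting the two identities and writing $d := \min(d_1, d_2)$, I split the resulting bracket as
\[
\tilde{G}_1(Y_1) - \tilde{G}_2(Y_2) \;=\; \bigl[\tilde{G}_1(Y_1) - \tilde{G}_2(Y_1)\bigr] + \bigl[\tilde{G}_2(Y_1) - \tilde{G}_2(Y_2)\bigr].
\]
For the second bracket, the bi-Lipschitz inequality together with $|\gamma_1(y) - \gamma_2(y)| \sim |y|^\delta$ gives $|Y_1 - Y_2| = O(|y|^\delta)$; a Puiseux-series computation using the estimate $\ord_y(Y_1^m - Y_2^m) \geq (m-1) + \delta$ for each exponent $m$ appearing in $\tilde{G}_2(Y)$ shows that this second bracket has $\ord_y \geq h + \delta - 1$. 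For the first bracket, let $H' := \ord_Y\bigl(\tilde{G}_1(Y) - \tilde{G}_2(Y)\bigr)$ and let $\tilde{\beta}_i$ be the coefficient of $Y^{H'}$ in $\tilde{G}_i(Y)$. By Lemma \ref{l:theconstant} we have $Y_1 = cy + o(y)$, hence $Y_1^{H'} = c^{H'} y^{H'}\bigl(1 + o(1)\bigr)$ and
\[
\bigl(\tilde{G}_1 - \tilde{G}_2\bigr)(Y_1) \;=\; (\tilde{\beta}_1 - \tilde{\beta}_2)\, c^{H'}\, y^{H'} \;+\; o(y^{H'}).
\]

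Under the hypothesis $H < h + \delta - 1 \leq h + d - 1$, both the remainder $O(y^{h+d-1})$ and the second-bracket contribution (of $\ord_y \geq h + \delta - 1$) are $o(y^H)$. Matching the order-$H$ coefficients on the two sides of the key identity then forces $H' = H$ (confirming the bi-Lipschitz invariance of $H$) and yields
\[
\tilde{\beta}_1 - \tilde{\beta}_2 \;=\; c^{\,h-H}\,(\tilde{a}_1 - \tilde{a}_2),
\]
which is the claimed transformation rule: the $g$-side analogue of $(\tilde{a}_1 - \tilde{a}_2)$ is obtained by multiplying by $c^{h-H}$. The main technical obstacle is verifying that the second bracket and the canyon remainder are of order strictly greater than $H$; this relies crucially on Lemma \ref{l:theconstant} (providing the common constant $c$ and the estimate $Y_i = cy + o(y)$) and on the bi-Lipschitz bound $|Y_1 - Y_2| = O(|y|^\delta)$, and it is precisely the hypothesis \eqref{eq:H-cond} that absorbs both error sources into $o(y^H)$, isolating the clean multiplicative relation above.
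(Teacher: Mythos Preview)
Your proof is correct and follows essentially the same approach as the paper: both arguments combine Lemma~\ref{l:yY}, Lemma~\ref{l:theconstant} (and its consequence $b_i=a_ic^{-h}$), and the bi-Lipschitz bound $|Y_1-Y_2|=O(|y|^\delta)$ to reduce the normalized difference to a single leading term of order $H$; the only organisational difference is that the paper first substitutes $Y_2=Y_1+O(Y_1^{\delta})$ into the $g$-expansion and then subtracts, whereas you subtract first and bound $\tilde G_2(Y_1)-\tilde G_2(Y_2)$ separately. One minor wording issue: since $Y_i=\varphi_2(\gamma_i(y),y)$ is only bi-Lipschitz (not holomorphic), the estimate $|Y_1^m-Y_2^m|=O(|y|^{m-1+\delta})$ is an analytic bound rather than a ``Puiseux-series computation'', but the bound itself is valid and suffices.
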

\begin{proof}
By \S\ref{ss:degree} and \S\ref{ss:order-h}, we have the following expansions:
\begin{equation}\label{eq:3}
\begin{aligned}
f(\gamma_1(y),y)&=a_1y^{h}+\cdots +O(y^{h+d_1-1}),\\
f(\gamma_2(y),y)&=a_2y^{h}+\cdots +O(y^{h+d_2-1}),\\
\end{aligned}
\end{equation}
where the terms of orders less than $h+d_1-1$ and $h+d_2-1$ are identical for every arc in $\GC(\gamma_{1*})$ and $\GC(\gamma_{2*})$, respectively. Similarly, after applying $\varphi$, by Lemma \ref{l:theconstant}, we obtain:
\begin{equation}\label{eq:4}
\begin{aligned}
(g\circ \varphi)(\gamma_1(y),y)&=g(\gamma'_1(Y_1),Y_1)+O(Y_1^{h+d_1-1})&=b_1Y_1^{h}+\cdots +O(Y_1^{h+d_1-1}),\\
(g\circ \varphi)(\gamma_2(y),y)&=g(\gamma'_2(Y_2),Y_2)+O(Y_2^{h+d_2-1})&=b_2Y_2^{h}+\cdots +O(Y_2^{h+d_2-1}),\\
\end{aligned}
\end{equation}
where $Y_1,Y_2$ are local variables, cf \eqref{eq:1}, and the terms of orders less than $h+d_1-1$ and $h+d_2-1$ are the same for every arc in $\GC(\gamma'_{1*})$ and $\GC(\gamma'_{2*})$, respectively. Since $f=g \circ \varphi$, by comparing \eqref{eq:3} to \eqref{eq:4}, we get:
\begin{equation}\label{eq:5}
\begin{aligned}
a_1 y^{h}+ \cdots + a_1\tilde{a}_1y^H   + \cdots +  O(y^{h+d_1-1})&=b_1 Y_1^{h}+ \cdots + \overline{b}_1Y_1^H   + \cdots +O(Y_1^{h+d_1-1}),\\
a_2 y^{h}+\cdots +  a_2\tilde{a}_2y^H   + \cdots + O(y^{h+d_2-1})&=b_2 Y_2^{h}+\cdots +\overline{b}_2Y_2^H   + \cdots +O(Y_2^{h+d_2-1}).\\
\end{aligned}
\end{equation}

\

As $\varphi$ is bi-Lipschitz, we have:
\begin{equation}\label{eq:delta}
|Y_1-Y_2|\leq\|\varphi(\gamma_1(y),y)-\varphi(\gamma_{2}(y),y)\| \sim \|(\gamma_1(y),y)-(\gamma_{2}(y),y)\|,
\end{equation}
and since $\delta=\ord_y(\gamma_1(y)-\gamma_{2}(y)) \ge 1$, we then obtain:
 $$Y_2=Y_1+O(Y_1^{\delta}).$$ 
We now use the assumption \eqref{eq:H-cond}. 
By using this substitution, together with the inequality \eqref{eq:lower},  and after dividing by the leading coefficients $a_i$, we may rewrite \eqref{eq:5} as follows:
\begin{equation}\label{eq:6}
\begin{aligned}
y^{h}+\cdots +\tilde{a}_1y^H +\cdots +O(y^{h+\delta-1})&=\frac{b_1}{a_1} Y_1^{h}+\cdots + \frac{\overline{b}_1}{a_1}Y_1^H + \cdots +O(Y_1^{h+ \delta -1}),\\
y^{h}+\cdots +\tilde{a}_2y^H +\cdots +O(y^{h+\delta-1})&=\frac{b_2}{a_2} Y_1^{h}+\cdots + \frac{\overline{b}_2}{a_2}Y_1^H + \cdots +O(Y_1^{h+\delta-1}).\\
\end{aligned}
\end{equation}

By applying \eqref{eq:1}, we get  $\frac{b_1}{a_1}=\frac{b_2}{a_2}= c^{-h}$, where $c$ is a constant for all canyons in $G_\ell(f)$, as established in Lemma \ref{l:theconstant}.

\

Now, by subtracting the second equation from the first, we obtain:

\begin{equation}\label{eq:7}
\begin{aligned}
\left(\tilde{a}_1-\tilde{a}_2\right) y^{H}+o(y^{H})=&  \left(\frac{\overline{b}_1}{a_1}-\frac{\overline{b}_2}{a_2}\right) Y_1^{H}+o(Y_1^{H})
\end{aligned}
\end{equation}
where $\left(\tilde{a}_1-\tilde{a}_2\right) \not= 0$ due to our assumption that $H> h$ .

By using \eqref{eq:1},  we conclude that  we must have the equality:
$$
\left(\tilde{a}_1-\tilde{a}_2\right)=c^{H}  \left(\frac{\overline{b}_1}{a_1}-\frac{\overline{b}_2}{a_2}\right) = c^{H}c^{-h}  \left(\frac{\overline{b}_1}{b_1}-\frac{\overline{b}_2}{b_2}\right) = c^{H-h}  \left(\tilde{b}_1-\tilde{b}_2\right),
$$
where 
\begin{itemize}
\item $\tilde{b}_1$ denotes the coefficient of $y^{H}$ in the expansion of $\frac{1}{b_1}(g\circ \varphi)(\gamma_1(y),y)$,
\item $\tilde{b}_2$ denotes the coefficient of $y^{H}$ in the expansion of $\frac{1}{b_2}(g\circ \varphi)(\gamma_2(y),y)$.
\end{itemize}

Conclusion: if the above condition $H<  h+\delta-1$ holds, then the action of the bi-Lipschitz map $\varphi$
on the pair $\bigl(H, (\tilde{a}_1-\tilde{a}_2)\bigr)$ is the identity on the first component $H$, and the multiplication of $(\tilde{a}_1-\tilde{a}_2)$  by $c^{h-H}$. 
\end{proof}


\subsection{Higher invariants: third level}\label{ss:higherlevel}\ \\
Let us give the reccursive receipt for constructing higher  level bi-Lipschitz invariants.
 We only consider here the third step, since the next ones are analogous. Each step hold upon a condition
similar to \eqref{eq:H-cond}, and the length of the chain of such higher invariant is limited by the size of the contact between canyons.
 
 In the third step we need at least 3 pairwise disjoint  gradient canyons $\GC(\gamma_{1*})$, $\GC(\gamma_{2*})$, $\GC(\gamma_{3*})$.
  

Let us assume the existence of three polar arcs $\gamma_i \in \GC(\gamma_{i*})$, such  that we have the following coincidences of orders:
\begin{equation}\label{eq:h3pol}
h:=\ord f(\gamma_1(y),y)=\ord f(\gamma_2(y),y)=\ord f(\gamma_3(y),y),
\end{equation}
and
\begin{equation}\label{eq:Hdiff}
 H:=\ord  \biggl[\frac1a_1f(\gamma_1(y),y)- \frac1a_2f(\gamma_2(y),y)\biggr]=\ord \biggl[\frac1a_3f(\gamma_3(y),y)- \frac1a_1f(\gamma_1(y),y)\biggr].
\end{equation}
Let $\delta$ denote the contact order between $\GC(\gamma_{1*})$ and $\GC(\gamma_{2*})$, and let $\delta'$ denote the contact order between $\GC(\gamma_{1*})$ and $\GC(\gamma_{3*})$.

We divide each of the two above differences in \eqref{eq:Hdiff} by $(\tilde{a}_1-\tilde{a}_2)$, and $(\tilde{a}_3-\tilde{a}_1)$, respectively, where $\tilde{a}_i$ denotes the coefficient of $y^{H}$ in the expansion of $\frac{1}{a_i}f(\gamma_1(y),y)$.

The resulting expansions in $y$ will then coincide before the term with a certain exponent $H' >H$. We also need that $H'$ satisfies the condition:
$$H' < \min\{h+\delta-1, h+\delta'-1\}.$$
From this moment on we do exactly like in the proof of Thm 2.5, i.e., by subtracting one from the other. Denoting by $A_{ij}$ the coefficient of $y^{H'}$ in the expansion of $$(\tilde{a}_i-\tilde{a}_j)^{-1}\biggl[\frac1a_i f(\gamma_i(y),y)- \frac1a_j f(\gamma_j(y),y)\biggr],$$ for $(i,j)\in\{(1,2), (3,1)\}$, we have thus proved the following statement:

\begin{corollary}\label{c:invar2} Let $f,g :(\bC^2,0)\to (\bC,0)$ be holomorphic functions such that $f=g\circ \varphi$, where $\varphi:(\bC^2,0)\to (\bC^2,0)$ is a bi-Lipschitz homeomorphism and $f$ is mini-regular in $x$. Assume that the conditions \eqref{eq:h3pol} and \eqref{eq:Hdiff} hold.

If $H'< \min\{h+\delta-1, h+\delta'-1\}$,  then the effect of the bi-Lipschitz map $\varphi$ on $\bigl( H',  ( A_{12} - A_{31}) \bigr)$ is the identity on $H'$, and the multiplication of $(A_{12} - A_{31} )$ by  $c^{H-H'}$, where $c$  is the non-zero constant provided by \eqref{eq:1}.
\fin
\end{corollary}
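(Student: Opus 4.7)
The plan is to mirror the proof of Corollary \ref{c:invar1} one level deeper: instead of comparing coefficients at $y^H$ of two normalized expansions, I will compare coefficients at $y^{H'}$ of the difference of two such normalized expansions, built from three polars $\gamma_1,\gamma_2,\gamma_3$.

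First I would set up the expansions. Using \S\ref{ss:degree}--\S\ref{ss:order-h} and Lemma \ref{l:yY}, for each $i \in \{1,2,3\}$ one has
\begin{equation*}
f(\gamma_i(y),y) = a_i y^h + \cdots + O(y^{h+d_i-1}), \qquad (g\circ\varphi)(\gamma_i(y),y) = b_i Y_i^h + \cdots + O(Y_i^{h+d_i-1}),
\end{equation*}
where $Y_i := \varphi_2(\gamma_i(y),y)$. Lemma \ref{l:theconstant} gives $Y_i = cy + o(y)$ with a common constant $c$ for all three canyons, and Corollary \ref{c:HPmain} yields $b_i/a_i = c^{-h}$. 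Applying Corollary \ref{c:invar1} to each of the pairs $(1,2)$ and $(3,1)$ gives $\tilde a_i - \tilde a_j = c^{H-h}(\tilde b_i - \tilde b_j)$, while the bi-Lipschitz estimate used in its proof supplies $Y_j = Y_1 + O(Y_1^{\delta_{1j}})$ with $\delta_{12}=\delta$, $\delta_{13}=\delta'$.

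Next I would form the normalized differences $R_{ij}^{(f)} := (\tilde a_i - \tilde a_j)^{-1}\bigl[\tfrac{1}{a_i}f(\gamma_i(y),y) - \tfrac{1}{a_j}f(\gamma_j(y),y)\bigr] = y^H + \cdots + A_{ij}\,y^{H'} + \cdots$, which, by the definition of $H'$, agree up to (but not including) the term $y^{H'}$. Substituting $f = g\circ\varphi$ and the scalar identities above, and then replacing $Y_j$ using the contact estimates, the same quantity reads on the $g$-side
\begin{equation*}
R_{ij}^{(f)} \;=\; c^{-H}(\tilde b_i - \tilde b_j)^{-1}\bigl[\tfrac{1}{b_i} g(\gamma'_i(Y_i),Y_i) - \tfrac{1}{b_j} g(\gamma'_j(Y_j),Y_j)\bigr] \;=\; c^{-H}\bigl(Y_1^H + \cdots + B_{ij}\,Y_1^{H'} + \cdots\bigr),
\end{equation*}
where $B_{ij}$ is the analogous coefficient built from $g$. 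Subtracting $R_{12}^{(f)} - R_{31}^{(f)}$, all terms of order strictly less than $y^{H'}$ cancel on both sides, producing
\begin{equation*}
(A_{12} - A_{31})\,y^{H'} + o(y^{H'}) \;=\; c^{-H}(B_{12} - B_{31})\,Y_1^{H'} + o(Y_1^{H'}).
\end{equation*}
Since $Y_1^{H'} = c^{H'}y^{H'} + o(y^{H'})$, this yields $A_{12} - A_{31} = c^{H'-H}(B_{12} - B_{31})$, i.e.\ $B_{12} - B_{31} = c^{H-H'}(A_{12}-A_{31})$, as claimed.

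The hardest part will be the bookkeeping around the correction terms: the canyon tails $O(Y_i^{h+d_i-1})$, the contact remainders $O(Y_1^{h+\delta-1})$ and $O(Y_1^{h+\delta'-1})$ coming from the substitutions $Y_j = Y_1 + O(Y_1^{\delta_{1j}})$, and the higher-order terms in $Y_1 = cy+\cdots$ from Theorem \ref{t:coeff} that enter through $Y_1^H$ and $Y_1^{H'}$. The coincidence hypotheses \eqref{eq:h3pol}, \eqref{eq:Hdiff} and the strict inequality $H' < \min\{h+\delta-1, h+\delta'-1\}$ are designed precisely to push every such spurious contribution strictly above order $y^{H'}$, leaving only the scalar identity above to survive.
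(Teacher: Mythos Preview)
Your proposal is correct and follows essentially the same route as the paper: the paper's proof is the short paragraph preceding the statement, which says to form the normalized differences $(\tilde a_i-\tilde a_j)^{-1}\bigl[\tfrac1{a_i}f(\gamma_i,y)-\tfrac1{a_j}f(\gamma_j,y)\bigr]$, note that they agree below order $H'$, and then ``do exactly like in the proof'' of Corollary~\ref{c:invar1} by subtracting and reading off the leading coefficient. Your write-up is simply a more explicit execution of this sketch; the only superfluous ingredient is the mention of Theorem~\ref{t:coeff}, which is not needed here any more than it was in Corollary~\ref{c:invar1}---the relation $Y_1=cy+o(y)$ together with the contact estimates $Y_j=Y_1+O(Y_1^{\delta_{1j}})$ and the hypothesis $H'<\min\{h+\delta-1,h+\delta'-1\}$ already push all remainders strictly above order $H'$.
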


\medskip


\section{Examples}\label{s:examples}

Computing the invariants of Corollary  \ref{c:invar1} in the family of functions $F_t$ of Example \ref{example1} displayed below shows that, for small enough generic parameters $t$ and $s$, $F_t$ and $F_s$ are not bi-Lipschitz equivalent.  
 In contrast, the invariants of Corollary  \ref{c:invar1} turn out to be not sufficient to settle the same question for the family $G_t$ in Example \ref{example2}.
It is by the key application of Theorem \ref{t:coeff} that we can decide that $G_t$ and $G_s$ are generically not bi-Lipschitz equivalent.


\begin{example}\label{example1}
Let us consider the family of function germs $F_t(x,y)=\frac{1}{3}x^3-t^2xy^{10}+y^{12}$, for $t$ in a small neighbourhood of $0\in \bC$. 

It appears that $F_t$ has two polar arcs: $\gamma_1(y)=ty^{5}$ and $\gamma_2(y)=-ty^5$. 
By direct computations we get the canyon degrees $d_{\gamma_1}=d_{\gamma_2}=6$, and their contact order $\delta :=\ord_y(\gamma_1(y)-\gamma_2(y))=5$. This also shows that the two polars belong to different canyons, i.e. $\GC(\gamma_{1*})\neq \GC(\gamma_{2*})$, with $\GC(\gamma_{1*}), \GC(\gamma_{2*})\subset G_{\ell}(F_t)$, where $\ell=\{x=0\}$. 
 We also get:
\begin{equation}\label{eq:F_t}
 F_t(\gamma_1(y),y)=y^{12}-\frac{2}{3}t^3y^{15},\quad  F_t(\gamma_2(y),y)=y^{12}+\frac{2}{3}t^3y^{15},
\end{equation}
thus $h=12$, and the leading coefficients of $F_t(\gamma_1(y),y)$, $F_t(\gamma_2(y),y)$ are $a_1=1$, $a_2=1$.

Comparing to \eqref{eq:F_t}, we then get:
$$
H:=\ord\left[a_1^{-1}F_t(\gamma_1(y),y)-a_2^{-1}F_t(\gamma_{2}(y),y) \right]=15.
$$

\medskip

We will follow the proof of Corollary  \ref{c:invar1} in order to contradict  the assumption that there exists a bi-Lipschitz homeomorphism $\varphi$ such that  $F_{t} = F_{s}\circ \varphi$ for some parameters $s,t\in \bC$ close enough to $0$.

 By Lemma \ref{l:theconstant} we have: 
\begin{equation}\label{eq:varphi-old}
 \varphi(\gamma_1(y),y) =(\gamma'_1(Y_1)+O(Y_1)^{d} ,Y_1)  \mbox{ and  } \varphi(\gamma_2(y),y) =(\gamma'_2(Y_2)+O(Y_2^{d}) ,Y_2),
\end{equation}
where $d:=d_{\gamma_1}=d_{\gamma_2}$, and where $Y_1, Y_2$ are local variables, and  $\gamma'_1$, $\gamma'_2$ are corresponding  polar arcs of $F_s$.
Since we have two options for the correspondence  by $\varphi$ between the pair of canyons of $F_s$ and of $F_t$, the computation of the invariants of  Corollary  \ref{c:invar1} falls into two cases.

\medskip

\noindent \textbf{Case 1.} 
  $\gamma'_1(Y_1)=sY_1^5$ and  $\gamma'_2(Y_2)=-sY_2^5$.

Then by our assumption we have: $F_t(\gamma_i(y),y) = F_s(\varphi(\gamma_i(y),y))=F_s(\gamma'_i(Y_i),Y_i) $, $i=1,2$,
and by applying \eqref{eq:varphi-old} and \eqref{eq:5}, we obtain the following equalities: 
\begin{equation}\label{ex2:1}
\begin{aligned}
y^{12}-\frac{2}{3}t^3y^{15}&=Y_1^{12}-\frac{2}{3}s^3Y_1^{15},\\
y^{12}+\frac{2}{3}t^3y^{15}&=Y_2^{12}+\frac{2}{3}s^3Y_2^{15}.
\end{aligned}
\end{equation}
In particular, the leading coefficients  $b_1$ and $b_2$ of $F_s(\gamma'_1(Y_1),Y_1)$ and $F_s(\gamma'_2(Y_2),Y_2)$, respectively,  are both equal to 1.  At this moment, since  $\frac11= \frac{ b_1}{a_1}$,
we deduce that $c^{-12}=1$.

On the other hand, since $\ord_y\bigl(\gamma_1(y)-\gamma_{2}(y)\bigr) = 5$, by  \eqref{eq:delta} we get:
\begin{equation}\label{ex2:change}
Y_2=Y_1+O(Y_1^{5}) = Y_1+pY_1^{5}+ o(Y_1^5),
\end{equation}
for some $p\in \bC$.  
Applying the change of variables \eqref{ex2:change} in the equations of \eqref{ex2:1}, with signs ``$-$'' and ``$+$", respectively,  yields:
\begin{equation}\label{eq:pm}
 \begin{aligned}
y^{12}\pm \frac{2}{3}t^3y^{15}&=\bigl( Y_1+pY_1^{5}+o(Y_1^5) \bigr)^{12} \pm \frac{2}{3}s^3\bigl( Y_1+pY_1^{5}+ o(Y_1^5)\bigr) ^{15} \\
\ &=Y_1^{12}\pm \frac{2}{3}s^3Y_1^{15}+12pY_1^{16} +o(Y_1^{16}).
\end{aligned}
\end{equation}
By subtracting one equation from the other, we get:
\begin{equation}\label{ex2:diff}
-\frac{4}{3}t^3y^{15}=-\frac{4}{3}s^3Y_1^{15} + o(Y_1^{15}).
\end{equation}

Referring to \eqref{eq:7}, we have in our case, since $H=15$:
$$
\tilde{a}_1-\tilde{a}_2=-\frac{4}{3}t^3 \quad \tilde{b}_1-\tilde{b}_2=-\frac{4}{3}s^3.
$$
where $\tilde{a}_1 = -\frac{2}{3}t^3 $ and $\tilde{a}_2 = \frac{2}{3}t^3$ are the coefficients of $y^H = y^{15}$ in the expansions of $a_1^{-1}F_t(\gamma_1(y),y)$ and   $a_2^{-1}F_t(\gamma_{2}(y),y)$, respectively, while $\tilde{b}_1 = -\frac{2}{3}s^3 $ and $\tilde{b}_2 = \frac{2}{3}s^3$ are the corresponding coefficients of $Y_1^{15}$ at the right hand side of \eqref{eq:pm}.

\sloppy

By Corollary  \ref{c:invar1}, we must  have $\frac{\tilde{b}_1-\tilde{b}_2}{\tilde{a}_1-\tilde{a}_2}=c^{h-H}$, which in our case  becomes $\frac{s^3}{t^3}=c^{-3}$, while we have seen before that $c^{-12}=1$. The two equalities do not have a common solution $c$ . This contradiction finishes our proof
 that, in Case 1,
  $F_t$ is not bi-Lipschitz equivalent to $F_s$ for a generic choice of values $t$ and $s$ of the parameter in our family. 
\bigskip

\noindent \textbf{Case 2.} 
$\gamma'_1(Y_1)=-sY_1^5$ and  $\gamma'_2(Y_2)=sY_2^5$.

The computations are faithfully similar to Case 1, with a difference of sign, for instance instead of \eqref{ex2:diff} we get:
$$-\frac{4}{3}t^3y^{15}=\frac{4}{3}s^3Y_1^{15} + o(Y_1^{15}).$$
At the end we see that the constant $c$ must satisfy two equalities, namely $-\frac{s^3}{t^3} = c^{-3}$ and $c^{-12} = 1$, which can happen only for $t$ and $s$ satisfying a certain relation. The same conclusion as in Case 1 then follows, i.e. that  $F_t$ is not bi-Lipschitz equivalent to $F_s$ 
for  a generic choice of the values $t$ and $s$.
\end{example}

\bigskip


\begin{example}\label{example2}
We consider now the family of function germs $G_t=x^3+y^{12}+xy^9+ty^{13}$, for $t$ in a small neighbourhood of $0\in \bC$. 

 There are two polars, independent of $t$,  namely:
  $$\gamma_a(y)=ay^{\frac{9}{2}} \mbox{ and } \gamma_{-a}(y)=-ay^{\frac{9}{2}},$$
   where $3a^2+1=0$, of contact order  $\delta = \frac{9}{2}$, in different canyons $\GC(\gamma_a)\neq \GC(\gamma_{-a})$, both tangent to the line $\ell =\{ x=0\}$, and of canyon degrees  $d(\gamma_a)=d(\gamma_{-a})=\frac{13}{2}$.     We also find easily $h=12$.

We follow the same  pattern as in the proof and the computations of the preceding example, namely we start by assuming that there is a bi-Lipschitz homeomorphism $\varphi$ such that
 $G_t=G_s\circ \varphi$. 
Here too, there are two  options for corresponding the two canyons of $G_t$ and $G_s$ by the bi-Lipschitz homeomorphism $\varphi$.

\medskip

By Theorem \ref{t:coeff}, we have:
\begin{equation}\label{eq1:varphi}
 Y=y+\frac{t-s}{12}y^2+\hot
\end{equation}

\medskip

 In one of the cases, call it Case 1,  we  get
$
H=\frac{27}{2}, \quad \tilde{a}_1-\tilde{a}_2=\frac{4a}{3}
$, and the condition $H<h+\delta-1=\frac{31}{2}$ holds.

After some more steps in the algorithm displayed in Example \ref{example1},  we get the following  equality similar to \eqref{ex2:diff}:
\begin{equation}\label{diff}
\frac{4a}{3}y^{\frac{27}{2}}=\frac{4a}{3}Y^{\frac{27}{2}}-12pY^{\frac{31}{2}} +\hot
\end{equation}
where $p\in \bC$ comes from the relation $Y_1=Y+pY^{\frac{9}{2}}+ \hot$ similar to \eqref{ex2:change}.

Pursuing the algorithm, we find  (cf Corollary  \ref{c:invar1}) that the effect of the bi-Lipschitz map on the pair $\bigl(H, (\tilde{a}_1-\tilde{a}_2)\bigr)$ is the identity on $H$, and the multiplication of $(\tilde{a}_1-\tilde{a}_2)$ by  $c^{h-H}=c^{-\frac{3}{2}}=1$. Note that \eqref{eq1:varphi} tells that $c=1$.  Up to this point,  we do not get any contradiction, thus no proof or disproof of our assumption. 

\medskip

To solve the dilemma, we need to make more efficient use of  Theorem \ref{t:coeff} and thus of the relation \eqref{eq1:varphi}, as follows. Replacing \eqref{eq1:varphi} into the right-hand side of \eqref{diff}, yields the equality:
$$
\begin{aligned}
\frac{4a}{3}y^{\frac{27}{2}}&=\frac{4a}{3}y^{\frac{27}{2}} + \frac{4a}{3}\cdot \frac{27}{2}\cdot \frac{(t-s)}{12}y^{\frac{29}{2}}-12py^{\frac{31}{2}} +\hot
\end{aligned}
$$
which is impossible if $t\not= s$.   Therefore the conclusion of Case 1 is that $G_t$ is not bi-Lipschitz equivalent to $G_s$ for any choice $s\not= t$.

The Case 2 is faithfully similar, along the same algorithm. It leads to the condition $c^{h-H}=c^{-\frac{3}{2}}=-1$, which has to be compared to  $c=1$ provided by \eqref{eq1:varphi}, thus no solution for $c$.
We may draw the conclusion in this case, and actually the final conclusion of the example, as follows: $G_t$ is not Lipschitz equivalent to $G_s$ for any choice of $s\not= t$.
\end{example}


\end{document}